\pgfplotsset{compat=1.15}
\theoremstyle{thmstyleone}
\newtheorem{theorem}{Theorem}
\newtheorem{corollary}{Corollary}
\newtheorem{lemma}{Lemma}
\theoremstyle{thmstyletwo}
\newtheorem{question}{Question}
\newtheorem{acknowledgements}{Acknowledgements}
\theoremstyle{thmstylethree}
\newtheorem{definition}{Definition}
\newcommand{\Z}{\ensuremath{\mathbb{Z}}}
\tikzset{
node distance=2.3cm, 
every state/.style={thick, fill=gray!10},  
auto,
snake it/.style={decorate, decoration=snake}
}
\tikzstyle{every path}=[line width=0.8pt]
\title{Formality of spaces with Lusternik-Schnirelmann category 1}
\author{Torgeir Aamb\o}
\date{}
\begin{document}
\maketitle

\begin{abstract}
    It is a well known fact that formal dg-algebras admit no non-trivial Massey products, 
    while the converse fails. We prove that by restricting to dg-algebras whose induced 
    product on cohomology is trivial, we do in fact get this converse. This allows us to 
    prove that spaces of Lusternik-Schnirelmann category $1$ are formal spaces.
\end{abstract}

\section{Introduction}

The notion of formal dg-algebras, being algebras that are quasi-isomorphic to their cohomology 
algebra, was introduced in \cite{DGMS} to solve problems in rational homotopy theory. In 
this paper, the authors remark that having no non-vanishing Massey 
$n$-products is a weaker property than being formal, meaning that Massey $n$-products 
serve as obstructions to formality. They claim that formality is equivalent to a 
``uniform vanishing'' – a stronger version of just being vanishing. In later times 
the study of dg-algebras have been explored further using the theory of 
$A_\infty$-algebras. Any dg-algebra $A$ can be viewed as a ``trivial'' 
$A_\infty$-algebra, which ables us to think about general $A_\infty$-algebras as homotopy 
theoretic versions of dg-algebras. In \cite{kadeishvili} Kadeishvili proved that the 
cohomology algebra $H(A)$ of a dg-algebra $A$ naturally admits an $A_\infty$-structure in 
such a way that there is a quasi-isomorphism of $A_\infty$-algebras $H(A)\longrightarrow A$. 
The higher products on this $A_\infty$-structure are often claimed to be the Massey 
products, but this is not always true as the $n$-ary product of the $A_\infty$-structure on 
$H(A)$ is not always a representative of the Massey $n$-product \cite{detection}. It is 
however the case that the vanishing of these higher arity products on $H(A)$ is a stronger 
condition than the vanishing of the Massey $n$-products \cite{AHO} – in fact, if all the 
higher products vanish, then the dg-algebra is formal. Hence having vanishing 
$A_\infty$-structure on $H(A)$ is equivalent to $A$ being formal. Using the equivalent 
definition of formality from \cite{keller} this is true almost by definition. 

In \cite{detection} the authors prove that even though the higher products on $H(A)$ might 
not be the Massey products, they are so up to a sum of lower degree products. We use 
this to prove that vanishing Massey products is equivalent to formality in the case 
that the cohomology algebra has trivial products. Hence the vanishing Massey products 
on spaces with vanishing cup products are always uniformly vanishing in the sense of 
\cite{DGMS}. 

There is a property of a space that allows us to know an upper bound for its 
cup-length, namely the Lusternik-Schnirelmann category of the space. This is an 
integer describing how a space can be glued together by cones. If we limit our study 
to spaces with Lusternik-Schnirelmann $1$, we know that the cup-length is $0$, 
meaning that we have vanishing products on reduced cohomology. Using the above result
we then prove that spaces with Lusternik-Schnirelmann category $1$ are formal. This 
generalizes the formality for suspended spaces, proven in \cite{FHT}.

The result is most likely already known by specialists, at least in the rational case.
But, this seems to be a new method of proving it. Alternatively one can use that any 
space $X$ with $\text{cat}_{LS}(X)=1$ is a co-H-space \cite{hess}, and then that any 
co-H-space is a wedge of spheres \cite{co-H-space}. Formality is preserved under the 
wedge product \cite{hess}, and since spheres are formal, we know that any co-H-space, 
and thus any space $X$ with $\text{cat}_{LS}(X)=1$ is a formal space. 

\begin{acknowledgements}
This work is the product of the authors master thesis at NTNU
supervised by Gereon Quick. We want to thank him for an interesting project, and for 
all the helpful feedback and comments. We also thank José Manuel Moreno-Fernández for 
providing comments on an earlier version of this paper. 
\end{acknowledgements}

\section{\texorpdfstring{$A_\infty$}{A}-algebras}

We provide a quick overview of the most important theory on $A_\infty$-algebras. 
For a more in-depth treatment, see \cite{keller}. 
Let $K$ be a field of characteristic $0$. 

\begin{definition}
    An $A_\infty$-algebra $(A, m)$, over $K$ is a $\Z$-graded vector 
    space $A=\bigoplus_{i\in \Z}A_i$ together with a family of $K$-linear maps 
    $m_n : A^{\otimes n}\longrightarrow A$ of degree $2-n$, such that the identities
    $$\sum_{r+s+t = n}(-1)^{r+st}m_{r+1+t} (Id^{\otimes r}\otimes m_s \otimes Id^{\otimes t}) = 0$$
    hold for all $n, s\geq 1$ and $r, t\geq 0$.
\end{definition}

These relations are called the coherence relations, or the Stasheff identities in $A$. 
For $n=1$ the coherence relation simply becomes $$0 = (-1)^{0+0}m_1 (m_1) = m_1^2 .$$ This 
means that $A$ is a cochain complex with differential $d=m_1$ as $m_1$ is a degree 
$1$ map. For $n=2$ we get
$$0 = (-1)^{1}m_2(Id\otimes m_1)+(-1)^{0}m_1 (m_2)+(-1)^{1}m_2 (m_1\otimes Id)$$
which reduces to $m_1 m_2 = m_2(m_1\otimes Id + Id\otimes m_1)$. This means that $m_1$ is 
a derivation with respect to $m_2$ as $m_2$ has degree $2$, usually stated as satisfying 
the Leibniz rule. The standard Leibniz rule comes out of this formula when applying it to 
elements and using the Koszul grading rule.

The third relation tells us that $m_2$ is not necessarily associative, and that the 
associator is given by $m_3$. This is usually referred to as $m_2$ being associative up 
to homotopy, which gives $A_\infty$-algebras their other name, strong homotopy associative 
algebras or sha-algebras for short. If $m_3=0$ (or $m_1=0$) this relation reduces to the 
associator being zero, which means that $m_2$ is an associative product. 

\begin{definition}
    A dg-algebra is an $A_\infty$-algebra $(A, m)$ where $m_i = 0$ for 
    $i \geq 3$. For simplicity we usually just denote it by $A$. 
\end{definition}

By the equations above describing the Stasheff identities, this definition is equivalent 
to the classical definition, i.e. a $\Z$-graded vector space with an associative product 
and a differential satisfying the Leibniz rule. 

\begin{definition}
    Let $(A, m^A)$ and $(B, m^B)$ be $A_\infty$-algebras. A morphism of 
    $A_\infty$-algebras $f:A\longrightarrow B$, also called $A_\infty$-morphism, is a 
    family of linear maps $f_n:A^{\otimes n}\longrightarrow B$ of degree $1-n$, such that
    $$\sum_{n = r+s+t}(-1)^{r+st}f_{r+1+t}(id^{\otimes r}\otimes m_s^A \otimes id^{\otimes t}) = \sum_{k=1}^{n}\sum_{n=i_1+\cdots i_k}(-1)^{u_k} m_k^B(f_{i_1}\otimes \cdots \otimes f_{i_k})$$
    where $u_k=\displaystyle \sum_{t=1}^{k-1}t(i_{k-t}-1)$.     
\end{definition}

We call $f$ an $A_\infty$-isomorphism, or an isomorphism of $A_\infty$-algebras, if $f_1$ 
is an isomorphism of chain complexes.

Since any $A_\infty$-algebra $(A, m)$ has a map $m_1$ such that $m_1^2=0$, we can also 
create its cohomology algebra, denoted $H(A)$. The cohomology algebra of a dg-algebra 
is a graded associative algebra with the induced product from $A$, which we can treat as 
a dg-algebra by letting it have trivial differential.

Let $f:A\longrightarrow B$ be an $A_\infty$-morphism. We call $f$ an 
$A_\infty$-quasi-isomorphism, or a quasi-isomorphism of $A_\infty$-algebras, if $f_1$ is 
a quasi-isomorphism of chain complexes, i.e. it induces an isomorphism on their cohomology 
algebras.

Notice that if $f_j=0$ for $j\geq 2$ and $m^A_i = 0 = m^B_i$ for $i\geq 3$, i.e. $A$ and 
$B$ are dg-algebras, then this definition reduces to the standard quasi-isomorphisms of 
dg-algebras.

\begin{definition}
    Let $A$ be a dg-algebra. We say $A$ is formal if it is 
    $A_\infty$-quasi-isomorphic to a dg-algebra with trivial differential.        
\end{definition}

A formal dg-algebra is isomorphic to $H(A)$, so we can define a dg-algebra to be 
formal if it is $A_\infty$-quasi-isomorphic to its cohomology algebra. We also remark 
that this is not the most classical definition, which uses a zig-zag of 
dg-quasi-isomorphisms instead of an $A_\infty$-quasi-isomorphism. This is equivalent 
to the one we are using here. See \cite{AHO} and \cite{keller} for further details. 

Formal dg-algebras are very nice algebras that as mentioned have their historical 
upbringing in rational homotopy theory. Examples include the Sullivan algebras of 
Kähler manifolds \cite{DGMS}.

\section{Massey products}

Massey products are partially defined higher order cohomology operations on dg-algebras
introduced by Massey in \cite{Massey}. Since they are only partially defined we need an 
easy way to package this information in. This is done through defining systems. Let $A$ 
be a dg-algebra and denote $\bar{x} = (-1)^{|x|}x$. 

\begin{definition}
    A defining system for a set of cohomology classes $x_1, \ldots, x_n$ 
    in $H(A)$ is a collection $\{ a_{i,j}\}$ of cochains in $A$ such that
    \begin{enumerate}
        \item $[a_{i-1, i}] = x_i$
        \item $d(a_{i, j}) = \sum_{i<k<j}\overline{a_{i, k}}a_{k, j}$
    \end{enumerate}
    for all pairs $(i,j)\neq (0,n)$ where $i\leq j$.
\end{definition}

\begin{definition}
    The Massey $n$-product of $n$ cohomology classes $x_1, \ldots, x_n$, 
    denoted $\langle x_1, \ldots, x_n\rangle$ is defined to be the set of all $[a_{0,n}]$, 
    where $$a_{0,n} = \sum_{0<k<n}\overline{a_{0, k}}a_{k, n}$$ such that $\{ a_{i,j} \}$ 
    is a defining system.         
\end{definition}

For $n=2$ this is just the induced product on cohomology, up to a sign. For $n=3$ this 
is the classical triple Massey product. When we use the phrase ``all Massey $n$-products'', 
we mean all Massey $n$-products for $n\geq 3$. 

The fact that multiple cohomology classes can be in this set means that the product is 
only partially defined. If this set contains just a single class, then we say the Massey 
product is uniquely defined. What matters for us is when these products are ``trivial''. 

\begin{definition}
    We say that the Massey $n$-product vanishes if it contains zero as an 
    element, i.e. $0\in \langle x_1, \ldots, x_n\rangle$.    
\end{definition}

\section{Results on formality}

One of the reasons we are interested in Massey products is that they serve as an 
obstruction to formality. Intuitively, if Massey products exist on the algebra of 
cochains $C(X)$ on a topological space $X$, this means that $C(X)$ contains more 
information about our space than the cohomology ring of $X$. If a non-trivial Massey 
product exists on $C(X)$, this means that there will always exist another space $Y$, 
not homeomorphic to $X$, but with the same cohomology ring as $X$. The most famous 
example of this are the Borromean rings. These are a set of three rings, every pair of 
them not linked with the two others, but all three still linked. This has the same 
cohomology ring as three unlinked circles, but the Borromean rings admit a non-vanishing 
Massey product, while the three unlinked circles does not. Hence the algebra of forms 
on the Borromean rings can not be formal. This means that Massey products detect 
non-formality. This is summarized into the following theorem.

\begin{theorem}[\cite{DGMS}]
    Let $(A, m)$ be a formal dg-algebra. Then all Massey $n$-products vanish.         
\end{theorem}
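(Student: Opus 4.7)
The plan is to reduce to the case of a dg-algebra with trivial differential, where Massey products vanish for trivial reasons, and then transfer this vanishing across the $A_\infty$-quasi-isomorphism coming from formality.

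Suppose first that $A$ has trivial differential (so $A = H(A)$). Given cohomology classes $x_1, \ldots, x_n$ for which the Massey product is defined, I would set $a_{i-1,i} = x_i$ and $a_{i,j} = 0$ whenever $j > i+1$. The defining system relation $d(a_{i,j}) = \sum_{i<k<j} \overline{a_{i,k}} a_{k,j}$ then reduces to checking that the right-hand side vanishes. For $j = i+2$ it equals $\overline{x_{i+1}} x_{i+2}$, which must vanish in order for the Massey product to be defined in the first place; for $j > i+2$ every summand contains a factor $a_{p,q}$ with $q - p > 1$ and is thus zero by construction. The same reasoning shows $a_{0,n} = 0$ for $n \geq 3$, so $0 \in \langle x_1, \ldots, x_n\rangle$.

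For a general formal dg-algebra $A$, formality provides an $A_\infty$-quasi-isomorphism $f: H(A) \to A$, where $H(A)$ is regarded as a dg-algebra with $m_1 = 0$ and only $m_2$ nontrivial. Given cohomology classes $x_1, \ldots, x_n$ in $H(A)$ (identified with cohomology of $A$ via $H(f_1)$), I would define $a_{i-1,i} = f_1(x_i)$ together with $a_{i,j} = \pm f_{j-i}(x_{i+1}, \ldots, x_j)$ for $j > i+1$, with signs dictated by the $A_\infty$-morphism relations. Since the source $A_\infty$-structure has only the product operation, those morphism relations collapse to precisely the defining system equations, exhibiting $a_{0,n}$ as the differential of $f_n(x_1, \ldots, x_n)$ up to terms already forced to vanish, and hence as a coboundary.

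The main obstacle will be pinning down the signs and verifying that the contracted $A_\infty$-morphism relations match the defining system relations term by term, since both sets of equations carry their own sign conventions. A cleaner though less explicit alternative would be to invoke the equivalence between $A_\infty$-formality and classical formality via a zigzag of dg-quasi-isomorphisms (noted in the introduction), and then appeal to the naturality of Massey products under dg-algebra morphisms to transport the trivial defining system from $H(A)$ along the zigzag into $A$.
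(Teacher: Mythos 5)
The paper does not prove this theorem; it is quoted from \cite{DGMS}, whose original argument runs through minimal models and the ``uniform vanishing'' picture. Your argument is therefore a genuinely different (and essentially correct) route, and in fact it is the one that meshes best with the $A_\infty$-framework the paper adopts: you use the components $f_k$ of the $A_\infty$-quasi-isomorphism $f\colon H(A)\to A$ supplied by formality as the entries $a_{i,j}=\pm f_{j-i}(x_{i+1},\dots,x_j)$ of an explicit defining system, so that the top morphism relation exhibits $a_{0,n}$ as $\pm\, d\bigl(f_n(x_1,\dots,x_n)\bigr)$ and hence as a coboundary. Your warm-up case of a trivial-differential algebra is fine as stated, including the observation that $\overline{x_{i+1}}x_{i+2}$ must vanish (a coboundary in a trivial-differential algebra is zero) whenever the Massey product is defined. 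The one step you should make explicit in the general case is why the morphism relations ``collapse'': the source terms $f_{m-1}(\dots,m_2(x_r,x_{r+1}),\dots)$ do not vanish for formal reasons, but because the definedness of $\langle x_1,\dots,x_n\rangle$ forces every consecutive product $x_r x_{r+1}$ to be a coboundary, hence zero in $H(A)$, and $f_{m-1}$ is multilinear. With that noted, only the differential term and the $\mu(f_p\otimes f_q)$ terms survive, which are exactly the defining-system equations up to the signs you flag; that sign bookkeeping is tedious but standard. Your fallback via the zigzag of dg-quasi-isomorphisms also works, but be aware that naturality of Massey products under a dg-map only gives an inclusion of product sets in the wrong direction along the leftward leg, so you would need the stronger (true, but citable) fact that quasi-isomorphisms induce bijections on Massey product sets.
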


Unfortunately, knowing a dg-algebra has no non-vanishing Massey products is not enough 
to determine that it is formal. But using the full $A_\infty$-algebras we can get closer 
to some version of this being true.

\begin{theorem}[\cite{kadeishvili}]
    \label{thm:Kadeishvili}
    Let $(A,m)$ be a dg-algebra. Then there exists an (up to $A_\infty$-isomorphism) 
    unique $A_\infty$-algebra structure on its cohomology algebra $H(A)$ with $m_1=0$, 
    $m_2$ the induced product from $A$, and a quasi-isomorphism of $A_\infty$-algebras 
    $H(A)\longrightarrow A$.
\end{theorem}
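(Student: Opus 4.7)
The plan is to prove this by homological perturbation (sometimes called homotopy transfer), constructing the higher operations explicitly via sums over planar trees, and then handling uniqueness by an inductive obstruction argument.

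First I would set up a contracting homotopy. Since $K$ is a field, we can decompose $A$ as a graded vector space as $A = B \oplus \mathcal{H} \oplus L$, where $B = \ima(d)$, $\mathcal{H}$ is a chosen complement of $B$ inside the cycles $Z = \ker(d)$, and $L$ is a complement of $Z$ in $A$. The restriction $d\colon L \to B$ is then a graded isomorphism. Using $\mathcal{H} \cong H(A)$, this yields three $K$-linear maps: an inclusion $i\colon H(A) \to A$ picking cycle representatives, a projection $p\colon A \to H(A)$, and a degree $-1$ map $h\colon A \to A$ which is zero on $\mathcal{H} \oplus L$ and inverts $d$ on $B$. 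These satisfy the side conditions $pi = \text{id}_{H(A)}$, $\text{id}_A - ip = dh + hd$, $h^2 = 0$, $ph = 0$, and $hi = 0$.

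Next I would define the higher products $m_n$ on $H(A)$ for $n \geq 2$ inductively, or equivalently, as a sum over planar rooted binary trees with $n$ leaves. To each such tree $T$ we assign an operation $\lambda_T \colon H(A)^{\otimes n} \to H(A)$ by decorating the $n$ leaves with $i$, every internal edge with $h$, every binary vertex with $m_2^A$, and the root with $p$. Then $m_n$ is $\pm$ the sum of all $\lambda_T$ over such trees $T$, with signs fixed by Koszul conventions. In particular $m_2 = p \circ m_2^A \circ (i \otimes i)$ coincides with the induced product, and $m_1 = 0$ since $i$ lands in cycles and $p$ kills boundaries. Simultaneously I would define the $A_\infty$-morphism $f\colon H(A) \to A$ by $f_1 = i$ and for $n \geq 2$, $f_n$ given by the analogous tree sum but with the root labeled $h$ instead of $p$; $f_1$ is a quasi-isomorphism by construction.

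The technical heart is then verifying the Stasheff identities for $(H(A), m)$ and the morphism identities for $f$. This is a purely combinatorial/sign-checking exercise: each identity, when expanded via the tree formula, reorganizes into pairs of terms that cancel using the homotopy relation $\text{id} - ip = dh + hd$ together with the associativity of $m_2^A$ (i.e.\ the $n=3$ Stasheff identity for the trivial $A_\infty$-structure on $A$). The main obstacle here is bookkeeping of Koszul signs from the degree $2-n$ of $m_n$ and the degree $1-n$ of $f_n$; I would handle this by adopting the convention of tensor coalgebra derivations, where the Stasheff identity becomes the single statement that the coderivation built from $\{m_n\}$ squares to zero, reducing the whole check to one algebraic identity for trees.

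Finally for uniqueness, suppose $(H(A), m')$ is another such structure with quasi-isomorphism $g\colon H(A) \to A$. One constructs an $A_\infty$-isomorphism $\varphi\colon (H(A), m) \to (H(A), m')$ by setting $\varphi_1 = \text{id}$ and defining $\varphi_n$ inductively so that the relevant component of the morphism equation holds. At each inductive step, the obstruction to extending $\varphi$ one arity higher is a certain Hochschild-type cocycle on $H(A)$ with trivial differential $m_1 = 0$; the existence of the lift of $\varphi_1$ through $f$ and $g$ forces this cocycle to be a coboundary, giving the required $\varphi_n$. This step is the most delicate, and one typically quotes it from Kadeishvili's original paper \cite{kadeishvili} or from the treatment in \cite{keller}.
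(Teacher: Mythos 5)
The paper does not prove this statement at all: it is quoted verbatim from Kadeishvili's paper (and \cite{keller}), so there is no in-text argument to compare yours against. Your proposal is the standard modern proof of the homotopy transfer theorem and is correct in outline. The decomposition $A = \ima(d) \oplus \mathcal{H} \oplus L$ with the side conditions on $i$, $p$, $h$ is exactly what is needed over a field, and the sum-over-planar-binary-trees formula (Merkulov/Kontsevich--Soibelman form of Kadeishvili's inductive construction) does produce an $A_\infty$-structure with $m_1 = 0$ and $m_2$ the induced product, together with the quasi-isomorphism $f$ with $f_1 = i$; packaging the Stasheff identities as the vanishing of the square of a coderivation on the tensor coalgebra is the right way to tame the signs. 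The one thin spot is uniqueness. As written, your inductive obstruction argument does not quite stand on its own: the solvability of the equation for $\varphi_n$ is not a formal consequence of $m_1 = 0$, and the phrase \emph{the existence of the lift of $\varphi_1$ through $f$ and $g$ forces this cocycle to be a coboundary} is doing all the work without justification. The cleaner route is to invoke that an $A_\infty$-quasi-isomorphism admits a homotopy inverse, compose $f$ with a quasi-inverse of $g$ to obtain a quasi-isomorphism $(H(A),m) \to (H(A),m')$, and observe that its linear part is a quasi-isomorphism between complexes with zero differential, hence an isomorphism of graded vector spaces, so the whole morphism is an $A_\infty$-isomorphism. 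Also note that this forces $\varphi_1$ to be an isomorphism, not necessarily the identity as you assert. Since you explicitly defer this step to \cite{kadeishvili} and \cite{keller}, the proposal is acceptable as a sketch, but that deferral is where the real content of the uniqueness claim lives.
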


Note that this does not mean that all dg-algebras are formal, as now $H(A)$ is not 
necessarily just a dg-algebra anymore. We can think of this higher structure on $H(A)$ 
as measuring how far away $A$ is from being formal. Since $m_1=0$ we get that the 
product $m_2$ is associative, but not for the reason we mentioned earlier. This means 
that these higher products no longer are interpreted as homotopies, but instead as 
something more like Massey products. Hence we call them the ``higher products'' on $H(A)$.

We said that the $A_\infty$-structure measures how far away $A$ is from being formal. 
We noted earlier that an $A_\infty$-algebra with $m_k=0$ for $k\geq 3$ is a dg-algebra. 
This means that if the $A_\infty$-structure on $H(A)$ has $m_k=0$ for $k\geq 3$, then 
$A$ is formal, as we then have an $A_\infty$-quasi-isomorphism between two dg-algebras. 
This is rather important so we state it as a theorem.

\begin{theorem}[\cite{AHO}]
    \label{thm:AHO_formal}
    Let $(A, m)$ be a dg-algebra. Then $A$ is formal if and only if all the higher products
    on $H(A)$ vanish.        
\end{theorem}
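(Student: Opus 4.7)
Both implications should follow rather directly from Theorem \ref{thm:Kadeishvili} together with the definition of formality, with no need to manipulate Stasheff identities explicitly. The whole statement is essentially an unpacking of these two ingredients.

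For the implication from vanishing higher products to formality, the plan is to observe that if the $A_\infty$-structure on $H(A)$ furnished by Theorem \ref{thm:Kadeishvili} has $m_k = 0$ for all $k \geq 3$, then, since $m_1 = 0$ is already part of that theorem, this structure is nothing more than a dg-algebra with trivial differential and with $m_2$ equal to the induced product on cohomology. The $A_\infty$-quasi-isomorphism $H(A) \to A$ provided by the same theorem then exhibits $A$ as $A_\infty$-quasi-isomorphic to a dg-algebra with trivial differential, which is precisely the definition of formality.

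For the converse, assume that $A$ is formal, so that there is an $A_\infty$-quasi-isomorphism between $A$ and some dg-algebra $B$ with trivial differential. Viewing $B$ as an $A_\infty$-algebra with $m_k = 0$ for $k \geq 3$, one sees that $B$ itself already satisfies the hypotheses of Theorem \ref{thm:Kadeishvili} as an $A_\infty$-structure on $H(B) = B$: the differential is zero and $m_2$ is the induced product. Combining the given $A_\infty$-quasi-isomorphism with the Kadeishvili map $H(A) \to A$, and using the fact that $A_\infty$-quasi-isomorphisms are invertible up to $A_\infty$-quasi-isomorphism (a standard fact, e.g. \cite{keller}), one obtains an $A_\infty$-quasi-isomorphism between two $A_\infty$-structures on the same cohomology that both satisfy Kadeishvili's hypotheses: the Kadeishvili structure on $H(A)$, and (via the isomorphism $H(A)\cong H(B)=B$) the structure $B$ itself, whose higher products vanish. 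The uniqueness clause of Theorem \ref{thm:Kadeishvili} then forces these two structures to be $A_\infty$-isomorphic. Hence the Kadeishvili structure on $H(A)$ is $A_\infty$-isomorphic to one with vanishing higher products, which, given that it is only well defined up to $A_\infty$-isomorphism in the first place, is what is meant by the higher products on $H(A)$ vanishing.

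The main subtle point I expect to meet is in the forward direction, namely the invocation of invertibility of $A_\infty$-quasi-isomorphisms and of Kadeishvili's uniqueness clause to compare two minimal models. Both are standard in the $A_\infty$-literature and I would cite them rather than reprove them; no new calculation beyond this bookkeeping should be required.
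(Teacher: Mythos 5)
Your proposal is correct and follows essentially the same route as the paper: the paper likewise gets one direction ``by definition'' from Kadeishvili's quasi-isomorphism $H(A)\to A$ together with the observation that an $A_\infty$-algebra with $m_k=0$ for $k\geq 3$ is a dg-algebra, and gets the converse from the uniqueness (up to $A_\infty$-isomorphism) of the $A_\infty$-structure on $H(A)$. Your write-up simply spells out the bookkeeping that the paper leaves implicit.
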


One direction of the proof is by definition, as described above. The other part is 
because the $A_\infty$-structure on $H(A)$ is unique up to isomorphism of 
$A_\infty$-algebras.

In \cite{DGMS} the authors say that formality is equivalent to a uniform vanishing of the 
Massey products. We are intuitively able to choose the zero element as our Massey 
products in such a nice uniform way that they are a part of an $A_\infty$-structure, 
which must mean that we have formality, as above.

We want to use this to get an idea of ``how close'' the normal Massey products are 
from being sufficient obstructions. What we mean by this is that we want to find a case 
where vanishing Massey products mean we have a formal dg-algebra. To get to such a result 
we first need a theorem that connects the higher products to normal Massey products.

\begin{theorem}[\cite{detection}]
    Let $A$ be a dg-algebra and $x\in \langle x_1, \ldots, x_n\rangle$ with $n\geq 3$. 
    Then for any $A_\infty$-structure on $H(A)$ we have 
    $$\epsilon m_n(x_1, \ldots, x_n) = x+\Gamma$$ where 
    $\Gamma \in \sum_{j=1}^{n-1}\text{Im}(m_j)$ and 
    $\epsilon = (-1)^{\sum_{j=1}^{n-1} (n-j)|x_j|}$.        
\end{theorem}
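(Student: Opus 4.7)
The plan is to use Kadeishvili's explicit construction of the $A_\infty$-structure on $H(A)$ and to compare its inductive output directly with the cochain-level data of a defining system. Since the $A_\infty$-structure on $H(A)$ is unique up to $A_\infty$-isomorphism by Theorem~\ref{thm:Kadeishvili}, and since the transition relations between any two such structures modify $m_n$ only by combinations lying in $\sum_{j<n}\ima(m_j)$, it suffices to establish the identity for one convenient model and read the general case off from it.

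First I would fix a splitting $i\colon H(A)\hookrightarrow Z(A)\subset A$ of cocycle representatives together with a contracting homotopy $h\colon A\to A$ satisfying $\mathrm{id}_A - ip = dh + hd$, where $p$ is the projection onto $H(A)$. Kadeishvili's recipe sets $m_2$ equal to the induced product and defines, inductively,
\[ m_n(y_1,\ldots,y_n) \;=\; p\Bigl(\sum_{\substack{r+s+t=n\\ 2\leq s<n}} \pm\, U_{r+1+t}\bigl(y_1,\ldots,y_r,\, hU_s(y_{r+1},\ldots,y_{r+s}),\, y_{r+s+1},\ldots,y_n\bigr)\Bigr), \]
where each $U_s$ is the chain-level lift of $m_s$ through $h$, with $U_1=i$ and $U_2$ the multiplication of representatives. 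A short computation shows that the iterated expressions $hU_s(\cdots)$ satisfy exactly the recursion $d(a_{i,j}) = \sum_{i<k<j}\overline{a_{i,k}}\,a_{k,j}$ of a defining system.

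I would then induct on $n$. For $n=3$ direct substitution shows that $hU_2$ applied to consecutive pairs of representatives is a legitimate choice of $a_{0,2}$ and $a_{1,3}$, so $m_3(x_1,x_2,x_3)$ coincides, up to the sign $\epsilon$, with a representative of $\langle x_1,x_2,x_3\rangle$, giving $\Gamma=0$. For the inductive step, given any defining system $\{a_{i,j}\}$ producing $x\in\langle x_1,\ldots,x_n\rangle$, each $hU_{j-i}$ evaluated on the subinterval $x_{i+1},\ldots,x_j$ differs from the chosen $a_{i,j}$ by a cocycle of lower Massey complexity; feeding this discrepancy into the outer sum in Kadeishvili's formula for $m_n$ and applying the inductive hypothesis to the lower-arity fragments shows that $\epsilon m_n(x_1,\ldots,x_n) - x$ decomposes as a sum of terms each factoring through some $m_j$ with $j<n$, which is the desired $\Gamma$.

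The hard part is the bookkeeping of Koszul signs. The exponent $\sum_{j=1}^{n-1}(n-j)|x_j|$ is precisely the sign produced when the inputs are desuspended and the shift operators are permuted past them one at a time in the bar-construction picture; aligning this with the $(-1)^{|x|}$ appearing in $\overline{x}$ in the defining system requires a careful systematic use of the Koszul rule, most transparently done by passing to the shifted complex $sA$ and working with the codifferential on the tensor coalgebra. Once the signs are organised this way, the remaining comparison is a mechanical consequence of the Stasheff identities.
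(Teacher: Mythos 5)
First, note that the paper does not prove this statement; it is imported verbatim from \cite{detection}, so your proposal has to be judged on its own merits rather than against an in-text argument. Your overall strategy (homotopy transfer along a contraction $(i,p,h)$, comparison of the tree-level lifts with a defining system, induction on arity) is indeed in the spirit of how such results are proved, but as written it has three genuine gaps. The central one is the claim that the iterates $hU_s(\cdots)$ ``satisfy exactly'' the defining-system recursion $d(a_{i,j})=\sum_{i<k<j}\overline{a_{i,k}}a_{k,j}$. They do not: from $\mathrm{id}_A-ip=dh+hd$ one gets $d\bigl(hU_s(\cdots)\bigr)=U_s(\cdots)-ipU_s(\cdots)-hd U_s(\cdots)$, and the term $ipU_s(\cdots)=i\,m_s(\cdots)$ is exactly the obstruction that prevents the transfer data from being a defining system unless the lower products vanish on the relevant inputs. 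This is the entire content of the theorem (and the reason \cref{cor:detection_unique} needs the hypothesis $m_k=0$ for $k\le n-1$); asserting it away makes the rest of the induction vacuous. Relatedly, your base case is wrong as stated: even granting that $hU_2$ yields a valid defining system when $\langle x_1,x_2,x_3\rangle$ is defined, that produces \emph{one} element of the triple product, whereas the theorem quantifies over every $x$ in it. A general $x$ differs from your representative by an element of the indeterminacy $x_1\cdot H(A)+H(A)\cdot x_3$, so $\Gamma$ lands in $\ima(m_2)$ but is not zero; for higher $n$ controlling the indeterminacy of $\langle x_1,\ldots,x_n\rangle$ by $\sum_{j<n}\ima(m_j)$ is a nontrivial part of what must be proved, and your phrase ``differs from the chosen $a_{i,j}$ by a cocycle of lower Massey complexity'' is not a precise statement (the difference $hU_{j-i}(\cdots)-a_{i,j}$ need not even be a cocycle).

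The third gap is the opening reduction. You invoke uniqueness of the transferred structure and claim that passing between two $A_\infty$-structures on $H(A)$ changes $m_n$ only by elements of $\sum_{j<n}\ima(m_j)$. Writing out the $A_\infty$-morphism identity for an isomorphism $f$ with $f_1=\mathrm{id}$, the difference $m_n-m'_n$ involves, besides terms $m'_k(f_{i_1}\otimes\cdots\otimes f_{i_k})\in\ima(m'_k)$ with $k<n$, also terms of the form $f_{r+1+t}(\mathrm{id}^{\otimes r}\otimes m_s\otimes \mathrm{id}^{\otimes t})$ with $s<n$ and $r+1+t\ge 2$; these are values of $f_{r+1+t}$, not of any $m_j$, and there is no a priori reason for them to lie in $\sum_{j<n}\ima(m_j)$. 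So the reduction to a single convenient model is not justified as stated. The proof in \cite{detection} avoids this by working directly with an arbitrary quasi-isomorphism $f\colon (H(A),m)\to A$: since $A$ is a dg-algebra, the morphism identities give $df_n(x_1,\ldots,x_n)=\sum\pm f_p(\ldots)f_q(\ldots)-\sum\pm f_{r+1+t}(\ldots,m_s(\ldots),\ldots)$, which exhibits the $f_{j-i}(x_{i+1},\ldots,x_j)$ as an approximate defining system whose defect is expressed in the $m_s$ themselves. If you rebuild your argument around that identity (and supply the comparison between this approximate defining system and an arbitrary one, which is where the $\Gamma$ term actually arises), the sign bookkeeping you describe via the bar construction is the right way to pin down $\epsilon$.
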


\begin{corollary}[\cite{detection}]
    \label{cor:detection_unique}
    Let $A$ be a dg-algebra and $m$ an $A_\infty$-structure on its cohomology $H(A)$ such 
    that $m_k = 0$ for all $1 \leq k \leq n-1$. Then the Massey $n$-product 
    $\langle x_1, \ldots, x_n \rangle$  is uniquely defined for any set of cohomology 
    classes $x_1, \ldots, x_n$. Furthermore the unique element in the Massey product is 
    recovered by $m_n$ up to a sign, i.e. 
    $\langle x_1, \ldots, x_n \rangle = \epsilon m_n(x_1, \ldots, x_n)$, where again 
    $\epsilon = (-1)^{\sum_{j=1}^{n-1} (n-j)|x_j|}$.
\end{corollary}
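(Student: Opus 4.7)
The plan is to derive the corollary as an almost immediate consequence of the preceding detection theorem, the only genuine content being the verification that a defining system exists so that the Massey product is nonempty.

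First I would assume $\langle x_1, \ldots, x_n \rangle$ is nonempty and pick any $x$ in it; the detection theorem then gives
$$\epsilon\, m_n(x_1, \ldots, x_n) = x + \Gamma, \qquad \Gamma \in \sum_{j=1}^{n-1} \text{Im}(m_j).$$
Under the hypothesis $m_j = 0$ for $1 \leq j \leq n-1$, each $\text{Im}(m_j)$ is the zero subspace, so $\Gamma = 0$ and $x = \epsilon\, m_n(x_1, \ldots, x_n)$. Since the right-hand side is independent of the chosen $x$, the Massey product contains at most one element, which must be $\epsilon\, m_n(x_1, \ldots, x_n)$.

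Next I would verify nonemptiness by induction on $n$. The base case $n = 3$ uses that $m_2 = 0$ is the induced cup product on $H(A)$, so every cocycle $\overline{a_{i-1,i}}\, a_{i,i+1}$ is a coboundary and admits a primitive $a_{i-1,i+1}$, producing a defining system. For the inductive step, I would use the already-proved uniqueness statement for smaller arities: each sub-Massey product $\langle x_{i+1}, \ldots, x_j \rangle$ with $j - i < n$ is by the inductive hypothesis a singleton equal to $\pm m_{j-i}(x_{i+1}, \ldots, x_j) = 0$. This vanishing is precisely what is needed to extend a partial defining system $\{a_{i,k}\}$ by choosing a cochain $a_{i,j}$ that bounds $\sum_{i<k<j} \overline{a_{i,k}}\, a_{k,j}$.

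The step I expect to be the main obstacle is the bookkeeping in this inductive construction: one must verify that the proposed boundary $\sum_{i<k<j} \overline{a_{i,k}}\, a_{k,j}$ is itself a cocycle and that its cohomology class is represented by the sub-Massey product $\langle x_{i+1}, \ldots, x_j \rangle$, so that the vanishing of the latter provides the required $a_{i,j}$. Conceptually, however, the entire corollary collapses to the single observation that the error term $\Gamma$ of the detection theorem vanishes term by term under the standing hypothesis.
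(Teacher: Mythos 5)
Your proposal is correct and follows essentially the same route the paper intends: the corollary is an immediate consequence of the preceding detection theorem, since the hypothesis $m_j=0$ for $j\leq n-1$ kills the error term $\Gamma$ and forces every element of $\langle x_1,\ldots,x_n\rangle$ to equal $\epsilon\, m_n(x_1,\ldots,x_n)$. Your additional inductive construction of a defining system (using that all shorter sub-Massey products are singletons equal to $0$, so every partial defining system extends) correctly supplies the nonemptiness that the paper itself only gestures at in the proof of \cref{thm:1} and otherwise defers to \cite{detection}.
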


By this we get our first result.

\begin{theorem}
    \label{thm:1}
    Let $A$ be a dg-algebra and $H(A)$ its cohomology algebra. If the induced product on 
    $H(A)$ is trivial and all Massey $n$-products on $A$ vanish, then $A$ is formal.        
\end{theorem}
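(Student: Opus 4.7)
The plan is to prove formality via Theorem \ref{thm:AHO_formal}: it suffices to fix some Kadeishvili $A_\infty$-structure $m$ on $H(A)$ (which exists by Theorem \ref{thm:Kadeishvili}) and then show that $m_k = 0$ for every $k \geq 3$. The proof will proceed by induction on the arity $k$, using Corollary \ref{cor:detection_unique} as the key bridge between the (cohomological) Massey products and the $A_\infty$-operations.

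For the base of the induction, observe that $m_1 = 0$ by the defining property of the Kadeishvili model, and $m_2 = 0$ by our standing hypothesis that the induced cup product on $H(A)$ is trivial. So we already have $m_k = 0$ for $1 \leq k \leq 2$.

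For the inductive step, suppose $n \geq 3$ and that $m_k = 0$ has been established for all $1 \leq k \leq n-1$. Then Corollary \ref{cor:detection_unique} applies: for any classes $x_1, \ldots, x_n \in H(A)$, the Massey $n$-product $\langle x_1, \ldots, x_n\rangle$ is uniquely defined and equals $\epsilon\, m_n(x_1, \ldots, x_n)$ with $\epsilon = \pm 1$. By the hypothesis of the theorem, this Massey product vanishes, meaning $0 \in \langle x_1, \ldots, x_n\rangle$; combined with uniqueness, this forces $\langle x_1, \ldots, x_n\rangle = \{0\}$, hence $m_n(x_1, \ldots, x_n) = 0$. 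Since the $x_i$ were arbitrary, $m_n = 0$, closing the induction.

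With $m_k = 0$ for all $k \geq 1$ (except with $m_2$ being the trivial product), Theorem \ref{thm:AHO_formal} immediately yields that $A$ is formal. There is no real analytic obstacle here, since the hard lifting is done by Kadeishvili's theorem and the detection corollary; the only point requiring a bit of care is checking that the inductive hypothesis really does ensure the Massey $n$-product is defined (so that ``vanishing'' is meaningful rather than vacuous), but this is exactly the content of Corollary \ref{cor:detection_unique}, which guarantees unique definedness given that all lower $m_k$ vanish.
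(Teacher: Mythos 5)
Your proposal is correct and follows essentially the same route as the paper's own proof: fix the Kadeishvili structure, note $m_1=m_2=0$, and induct on arity using Corollary \ref{cor:detection_unique} to identify the unique (hence zero) Massey $n$-product with $\epsilon\, m_n$, concluding via Theorem \ref{thm:AHO_formal}. Your explicit remark that unique definedness guarantees the vanishing hypothesis is non-vacuous matches the paper's brief aside on definedness of the higher Massey products.
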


\begin{proof}
    By \cref{thm:Kadeishvili} we know that $H(A)$ can be equipped with the structure 
    $\{m_i\}$ of an $A_\infty$-algebra such that $m_1=0$ and $m_2$ is the product induced 
    from $A$, which is assumed to be trivial. We claim that $m_k = 0$ for all $k\geq 3$ 
    as well, and hence that $A$ is formal by \cref{thm:AHO_formal}. We prove this 
    claim by induction. 
    
    Since $m_2=0$ we know that all Massey triple products are defined. By the below 
    induction argument all the higher Massey products will be defined as well. Since 
    $m_2=0$ we already have our base case.
    
    Assume $m_k = 0$ for $1\leq k\leq n-1$. By \cref{cor:detection_unique} we know 
    that $\langle x_1, \ldots, x_n \rangle$ consists of a unique element for all choices of 
    classes $x_1, \ldots, x_n$. This element is by assumption the zero class, as we assumed 
    all Massey products to be vanishing. This class is recovered up to a sign by $m_n$, 
    which means $m_n(x_1,\ldots, x_n)=0$ for all choices of $x_1, \ldots, x_n$. Hence 
    $m_n=0$ and we are done. 
\end{proof}

This proof shows that when we have a trivial induced product on $H(A)$, the vanishing 
Massey products neatly forms a trivial $A_\infty$-structure on $H(A)$, which we earlier 
said was the way of interpreting the uniform vanishing.

It is tempting to think that having trivial product in cohomology also makes every attempt 
to build and produce a Massey product impossible. But there are examples of this not being 
the case. One example is the free loop space of an even-dimensional sphere. Its cohomology 
algebra has trivial product, and it is shown in \cite{nonformal_loop} to have non-zero 
Massey products. Hence it can not be formal. We also mentioned the Borromean rings earlier, 
which gives another example. 

\begin{question}
Is there a more general procedure for choosing elements in all the Massey 
products in such a way that they form an $A_\infty$-structure?
\end{question}

\section{The Lusternik-Schnirelmann category}

For topological spaces the criteria of having trivial cup products is quite strong. Say 
we have a path-connected topological space $X$. It has zeroth cohomology $H^0(X;K)\cong K$. 
Hence the requirement to have trivial cup product reduces to requiring $H^i(X;K)=0$ for 
$i>0$, which is really limiting. One solution to this is to look at reduced cohomology. 

\begin{definition}
    Let $X$ be a topological space and $C^\ast(X;K)$ its cochain complex (treated here as a 
    dg-algebra). We define its augmented cochain dg-algebra, denoted 
    $\widetilde{C}^\ast(X;K)$, by adding a copy of the ground field $K$ injectively farthest 
    to the left, i.e.
    \begin{equation*}
        \cdots \longrightarrow 0 \longrightarrow K \overset{\epsilon}\longrightarrow 
        C^0(X;K)\longrightarrow C^1(X;K) \longrightarrow \cdots \longrightarrow C^n(X;K) 
        \longrightarrow \cdots  
    \end{equation*}
\end{definition} 

The cohomology algebra of the augmented cochain complex is called the reduced cohomology 
algebra of $X$ and is denoted $\widetilde{H}^\ast(X;K)$. If the space $X$ is connected, 
then $\widetilde{H}^0(X;K)=0$, meaning that we have completely removed the problem 
described above. 

Spaces with trivial cup product are not in abundance -- even in reduced cohomology -- 
but examples include the spheres and more generally a suspended space. There are ways to 
make sure that a space has a trivial cup product and one of these is using the 
Lusternik-Schnirelmann category. 

\begin{definition}
    Let $X$ be a topological space. The Lusternik-Schnirelmann category of $X$, denoted 
    $\text{cat}_{LS}(X)$, is the least integer $n$ such that there is a covering of $X$ 
    by $n+1$ open subsets $U_i$ that are all contractible in $X$, i.e. their inclusion 
    into $X$ is null-homotopic.    
\end{definition}

This invariant was originally developed in \cite{lscat} as an invariant on manifolds to be 
a lower bound for the number of critical points any real valued function on it could have. 
It has since become a useful – but very difficult to calculate – invariant of topological 
spaces. 

Recall that the cup-length of a topological space 
$X$, denoted $\text{cl}(X)$ is the largest integer $n$ such that a chain 
$[x_1]\cup \cdots \cup [x_n]$ of cohomology classes with $\deg|x_i|\geq 1$ is non-zero. 
We have the following fundamental relation between the cup length and the 
Lusternik-Schnirelmann category of $X$.

\begin{lemma}[\cite{lscategorybook}]
    Let $X$ be a topological space. Then the cup length of $X$ is a lower bound for its 
    Lusternik-Schnirelmann category, i.e. $\text{cl}(X)\leq \text{cat}_{LS}(X)$.    
\end{lemma}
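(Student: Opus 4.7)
The plan is to use the covering directly together with the long exact sequence of a pair. Set $n = \text{cat}_{LS}(X)$ and fix a categorical cover $U_0, \ldots, U_n$ of $X$, where each inclusion $\iota_i \colon U_i \hookrightarrow X$ is null-homotopic. I want to show that any cup product $x_0 \cup x_1 \cup \cdots \cup x_n$ of $n+1$ cohomology classes with $|x_i| \geq 1$ must vanish.

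The first observation is that since each $\iota_i$ is null-homotopic, the induced map $\iota_i^\ast \colon H^\ast(X;K) \to H^\ast(U_i;K)$ factors through $H^\ast(\mathrm{pt};K)$, hence $\iota_i^\ast x_i = 0$ for every positive-degree class $x_i$. The long exact sequence of the pair $(X, U_i)$ then provides a relative lift $\tilde x_i \in H^{|x_i|}(X, U_i;K)$ whose image under $H^\ast(X, U_i;K) \to H^\ast(X;K)$ is $x_i$.

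Next I would invoke the relative cup product, which gives a pairing
\[
H^\ast(X, U_i;K) \otimes H^\ast(X, U_j;K) \longrightarrow H^\ast(X, U_i \cup U_j;K),
\]
and iterate it to form
\[
\tilde x_0 \cup \tilde x_1 \cup \cdots \cup \tilde x_n \in H^\ast\bigl(X, U_0 \cup U_1 \cup \cdots \cup U_n;K\bigr).
\]
Because the $U_i$ cover $X$, the target group is $H^\ast(X, X;K) = 0$, so this relative class is zero. Applying the natural map back to absolute cohomology, naturality of the cup product yields $x_0 \cup x_1 \cup \cdots \cup x_n = 0$, which is exactly the statement that $\text{cl}(X) \leq n$.

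The main subtlety — and the only step that is not purely formal — is justifying the use of the relative cup product on these particular open sets, which strictly speaking requires either working with good pairs (e.g.\ thickening the $U_i$ to closed sets with contractible neighborhoods, or passing to a CW model where the inclusions are cofibrations) or developing the argument using \v{C}ech or singular relative cochains directly. Once this technical point is dealt with, the chain of relative cup products in $H^\ast(X, \bigcup_i U_i;K) = 0$ closes the argument immediately.
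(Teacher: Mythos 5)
The paper does not prove this lemma; it simply cites \cite{lscategorybook}, and your argument is precisely the standard proof given there (lift each positive-degree class to $H^\ast(X,U_i;K)$ via the null-homotopy and the long exact sequence of the pair, multiply using relative cup products into $H^\ast(X,\bigcup_i U_i;K)=H^\ast(X,X;K)=0$, and push back to absolute cohomology). Your proof is correct, and the one technical point you flag is in fact already fine as stated: since the $U_i$ are open, the relative cup product $H^\ast(X,A;K)\otimes H^\ast(X,B;K)\to H^\ast(X,A\cup B;K)$ is available in singular cohomology by the small-simplices theorem, with no need to thicken to good pairs or pass to a CW model.
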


Thus, choosing spaces with $\text{cat}_{LS}(X) = 1$ means we have trivial product on 
reduced cohomology. Examples of such spaces are again the suspended spaces, as they are 
the union of two cones. The last thing we need to know is whether spaces with 
Lusternik-Schnirelmann category $1$ admit any non-vanishing Massey  $n$-products. Luckily, 
by Rudyak we know that they dont.

\begin{theorem}[\cite{Rudyak}]
    \label{thm:rudyak}
    Let $X$ be a topological space with $\text{cat}_{LS}(X)\leq 1$ and 
    $x_1, \ldots, x_n \in \widetilde{H}^\ast(X;K)$. If the Massey $n$-product 
    $\langle x_1, \ldots, x_n\rangle$ is defined then 
    $\langle x_1, \ldots, x_n\rangle = 0$.    
\end{theorem}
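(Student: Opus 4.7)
My plan is to reduce the statement to the suspended case via the Ganea characterization of Lusternik--Schnirelmann category, and then invoke naturality of Massey products. Concretely, the Ganea characterization says $\text{cat}_{LS}(X) \leq 1$ is equivalent (for nice $X$) to the first Ganea fibration, which has homotopy type $\Sigma \Omega X \to X$, admitting a homotopy section. This produces maps $s : X \to \Sigma \Omega X$ and $p : \Sigma \Omega X \to X$ with $p \circ s \simeq \mathrm{id}_X$, and hence $s^\ast p^\ast = \mathrm{id}$ on $\widetilde{H}^\ast(X; K)$, so that $X$ is a cohomological retract of the suspension $\Sigma \Omega X$.

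Next, I would establish that every defined Massey product on a suspension vanishes. For $n = 2$ this is immediate from the null-homotopy of the reduced diagonal of $\Sigma Y$. For higher $n$ one uses the co-H-structure on $\Sigma Y$ to modify the defining system at the cochain level, picking representatives so that $a_{0,n}$ becomes a coboundary.

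Combining the two steps by naturality: given $x_1, \ldots, x_n \in \widetilde{H}^\ast(X;K)$ with $\langle x_1, \ldots, x_n\rangle$ defined on $X$, put $y_i = p^\ast x_i \in \widetilde{H}^\ast(\Sigma \Omega X; K)$. By the previous step, $\langle y_1, \ldots, y_n\rangle$ is defined and contains $0$. By naturality of Massey products applied to $s$,
$$0 \in s^\ast \langle y_1, \ldots, y_n\rangle \subseteq \langle s^\ast y_1, \ldots, s^\ast y_n\rangle = \langle x_1, \ldots, x_n\rangle,$$
which gives the desired conclusion.

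The main obstacle I expect is the vanishing of higher Massey products on a suspension: the induction requires carefully exploiting the comultiplication on $\Sigma Y$ at the cochain level to produce coherent null-homotopies of iterated cup products. In effect this amounts to showing that $\widetilde{C}^\ast(\Sigma Y; K)$ admits an $A_\infty$-model with vanishing higher products -- i.e.\ formality of suspended spaces -- so the bulk of the work is essentially the suspended special case that this theorem is designed to generalize.
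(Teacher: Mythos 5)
The paper itself offers no proof of this statement: it is imported wholesale from \cite{Rudyak}, whose argument runs through category weight (every element of a defined Massey $n$-product has category weight at least $2$, while a nonzero class on a space of Lusternik--Schnirelmann category $1$ has weight at most $1$). Your reduction through the first Ganea fibration is therefore a genuinely different route, and its skeleton is sound: a homotopy section $s$ of the evaluation $p\colon \Sigma\Omega X\to X$ gives $s^{\ast}p^{\ast}=\mathrm{id}$; naturality of Massey products along $p$ (not ``the previous step,'' as you write) shows that $\langle p^{\ast}x_1,\ldots,p^{\ast}x_n\rangle$ is defined; and naturality along $s$ transports a vanishing element back into $\langle x_1,\ldots,x_n\rangle$. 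This is close in spirit to the alternative argument sketched in the introduction (category $1$ implies co-H-space implies dominated by a suspension).

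The genuine gap is exactly the step you flag yourself: the vanishing of all defined Massey $n$-products on a suspension. As written this is only gestured at (``modify the defining system using the co-H-structure''), yet it carries the entire analytic content of the proof, since the Ganea reduction is soft. It can be closed by citation rather than by a new cochain-level construction: suspensions are formal over a field of characteristic $0$ \cite{FHT}, and formal dg-algebras have vanishing Massey products by the theorem of \cite{DGMS} quoted at the start of Section 4; but without that citation your proposal does not prove the theorem. Two further caveats. First, the Ganea characterization of $\text{cat}_{LS}(X)\leq 1$ requires $X$ to be path-connected and of CW (or normal, well-pointed) type, a hypothesis absent from the statement as given. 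Second, your argument yields only $0\in\langle x_1,\ldots,x_n\rangle$, whereas the statement asserts the set is zero; this suffices for the paper's definition of vanishing and for the application in \cref{thm:1}, and if the stronger conclusion is wanted one can instead push forward along $p^{\ast}$, which is injective because $s^{\ast}p^{\ast}=\mathrm{id}$, using that on the suspension each defined Massey product of reduced classes is the singleton $\{0\}$ by the inductive argument of \cref{thm:1} together with \cref{cor:detection_unique}.
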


We call spaces with formal reduced cochain algebras reduced formal spaces. This means now
that spaces with Lusternik-Schnirelmann category $1$ are reduced formal by 
\cref{thm:rudyak} and \cref{thm:1}. 

\begin{corollary}
    \label{cor:reduced_formal}
    Let $X$ be a topological space with $\text{cat}_{LS}(X)\leq 1$. Then 
    $\widetilde{C}^\ast(X;K)$ is a formal dg-algebra. 
\end{corollary}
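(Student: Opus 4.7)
The plan is to apply \cref{thm:1} directly to the augmented cochain dg-algebra $\widetilde{C}^\ast(X;K)$, whose cohomology is the reduced cohomology $\widetilde{H}^\ast(X;K)$. To do so I need to verify the two hypotheses of that theorem: that the induced product on $\widetilde{H}^\ast(X;K)$ is trivial, and that every Massey $n$-product on $\widetilde{C}^\ast(X;K)$ vanishes.

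For the first hypothesis, I would invoke the cup-length lemma. Since $\text{cat}_{LS}(X)\leq 1$, we have $\text{cl}(X)\leq 1$, so any cup product $[x_1]\cup [x_2]$ of two classes of degree $\geq 1$ vanishes in $H^\ast(X;K)$. The augmentation in the definition of $\widetilde{C}^\ast(X;K)$ is injective in degree zero and forces every class in $\widetilde{H}^\ast(X;K)$ to live in strictly positive degree, so the induced product on $\widetilde{H}^\ast(X;K)$ is identically zero.

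For the second hypothesis, Rudyak's \cref{thm:rudyak} states that whenever a Massey $n$-product on $\widetilde{H}^\ast(X;K)$ is \emph{defined}, it equals zero. This is exactly what the inductive argument in the proof of \cref{thm:1} needs: starting from $m_2=0$, each step assumes $m_1,\dots,m_{n-1}$ vanish on the Kadeishvili $A_\infty$-structure of $\widetilde{H}^\ast(X;K)$, which by \cref{cor:detection_unique} forces the Massey $n$-product on any $n$ classes to be uniquely defined, whereupon Rudyak's theorem guarantees it is zero, hence $m_n=0$. Thus all Massey $n$-products are vanishing in the sense required by \cref{thm:1}.

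With both hypotheses in hand, \cref{thm:1} applied to $A=\widetilde{C}^\ast(X;K)$ yields that this dg-algebra is formal, which is the content of the corollary. There is no real obstacle here; the only subtlety worth being explicit about is the passage from the unreduced cohomology, where cup-length $1$ merely controls products in positive degrees, to the reduced cohomology, where every class has positive degree and the product is therefore genuinely trivial — precisely what \cref{thm:1} requires.
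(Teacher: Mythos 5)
Your proposal is correct and follows exactly the paper's route: the paper derives this corollary by combining the cup-length bound (giving trivial products on $\widetilde{H}^\ast(X;K)$) with \cref{thm:rudyak} and then applying \cref{thm:1} to $\widetilde{C}^\ast(X;K)$. Your extra remark on how Rudyak's ``defined implies zero'' statement feeds the induction in \cref{thm:1} is a helpful clarification but not a departure from the paper's argument.
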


We still have to deal with the degree $0$ cochains in order to say that a space $X$ with
$\text{cat}_{LS}(X)\leq 1$ is formal and not just reduced formal. To do this we must 
construct the neccesary span of dg-quasi-isomorphism.

\begin{theorem}
    \label{thm:reduced_formal_formal}
    Any reduced formal topological space $X$ is formal.     
\end{theorem}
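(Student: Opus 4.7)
The plan is to deduce formality of $C^\ast(X;K)$ from reduced formality by transporting the zig-zag of dg-quasi-isomorphisms for $\wC^\ast(X;K)$ and $\wH^\ast(X;K)$ through a unitalization construction. When $X$ is path-connected, the augmentation $\eta: C^\ast(X;K) \to K$ (e.g.\ evaluation at a basepoint) splits the unit inclusion $K \cdot 1 \hookrightarrow C^0(X;K)$, yielding a decomposition $C^\ast(X;K) \cong K \cdot 1 \oplus \bar{C}^\ast(X;K)$ as a unital dg-algebra, where the augmentation ideal $\bar{C}^\ast(X;K) := \ker(\eta)$ is a non-unital dg-subalgebra. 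The inclusion $\bar{C}^\ast(X;K) \hookrightarrow \wC^\ast(X;K)$ (zero on the added degree $-1$ summand, identity otherwise) is a quasi-isomorphism of non-unital dg-algebras, and analogously $H^\ast(X;K) \cong K \cdot 1 \oplus \wH^\ast(X;K)$ at the cohomology level, making $C^\ast(X;K)$ the unitalization of $\bar{C}^\ast(X;K)$ and $H^\ast(X;K)$ the unitalization of $\wH^\ast(X;K)$.

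By \cref{cor:reduced_formal} and the equivalence between the $A_\infty$- and zig-zag-based definitions of formality noted earlier, reduced formality of $X$ provides a zig-zag
\begin{equation*}
    \bar{C}^\ast(X;K) \xleftarrow{\sim} P_1 \xrightarrow{\sim} \cdots \xrightarrow{\sim} \wH^\ast(X;K)
\end{equation*}
of dg-quasi-isomorphisms, which I would arrange inside the category of non-unital dg-algebras (possible since both endpoints carry canonical augmentations; intermediate objects can be replaced by their augmentation ideals if needed). I would then apply the unitalization functor $A \mapsto A^+ := K \cdot 1 \oplus A$, with product $(k,a)(k',a') = (kk', ka' + k'a + aa')$, termwise to this zig-zag. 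The functor carries non-unital dg-algebra morphisms to unital ones and preserves quasi-isomorphisms, since on cohomology it acts as the identity on the adjoined summand and as the given map on $A$. This yields
\begin{equation*}
    C^\ast(X;K) \cong \bar{C}^\ast(X;K)^+ \xleftarrow{\sim} P_1^+ \xrightarrow{\sim} \cdots \xrightarrow{\sim} \wH^\ast(X;K)^+ \cong H^\ast(X;K),
\end{equation*}
a zig-zag of dg-quasi-isomorphisms between unital dg-algebras, exhibiting $C^\ast(X;K)$ as formal.

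The main obstacle is the setup in the first step: precisely identifying the augmented cochain complex $\wC^\ast(X;K)$ with the augmentation ideal $\bar{C}^\ast(X;K)$ via a natural quasi-isomorphism of non-unital dg-algebras, and verifying that the reduced formality zig-zag can be realized inside the non-unital (equivalently, augmented) category. Once this framework is in place, the unitalization step is a routine functorial manipulation that transports the zig-zag to the unital setting and delivers the desired conclusion.
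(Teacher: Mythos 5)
Your proof is correct, but it takes a genuinely different route from the paper's. The paper works with a single span $\wH^\ast(X;K)\leftarrow B\rightarrow \wC^\ast(X;K)$ and performs an explicit degree-shifting surgery on the middle term: it deletes the adjoined copy of $K$ in degree $-1$, folds $B^{-1}$ into degree $0$ to form $(B')^0=B^0\oplus B^{-1}$, and replaces the structure maps by $[d_B^0,0]$, $[q^0,0]$ and $[0,q^{-1}]$ so that the bottom row becomes the unreduced cohomology with $H^0(X;K)\cong K$. You instead never touch the degree $-1$ summand: you pass to the augmentation ideal $\bar{C}^\ast(X;K)=\ker(\eta)$, observe that its inclusion into $\wC^\ast(X;K)$ is a quasi-isomorphism of non-unital dg-algebras (so reduced formality transports to $\bar{C}^\ast(X;K)$), and then apply the unitalization functor $A\mapsto A^+$ termwise to the resulting zig-zag. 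Your approach buys a cleaner treatment of the multiplicative structure: unitalization is visibly a functor on non-unital dg-algebras preserving quasi-isomorphisms, and $\bar{C}^\ast(X;K)^+\cong C^\ast(X;K)$, $\wH^\ast(X;K)^+\cong H^\ast(X;K)$ are honest isomorphisms of unital dg-algebras; by contrast, the paper's diagrammatic argument leaves implicit what the product on $B^0\oplus B^{-1}$ is and why the modified arrows remain algebra maps. The one point you rightly flag --- realizing the zig-zag in the non-unital category --- is not actually an obstacle here, since the paper's dg-algebras are defined as $A_\infty$-algebras with vanishing higher operations and are never assumed unital, so the zig-zag witnessing formality of $\wC^\ast(X;K)$ (whose endpoint $\wH^\ast(X;K)$ has no unit anyway) already lives there; the price you pay is the extra identification $\bar{C}^\ast(X;K)\simeq\wC^\ast(X;K)$ and the augmented framework, whereas the paper's surgery is elementary and self-contained.
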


\begin{proof}
    Since $X$ is reduced formal we know that there is a span of dg-quasi-isomorphisms 
    $$\widetilde{H}^\ast(X;K)\longleftarrow B \longrightarrow \widetilde{C}^\ast(X;K)$$ for some 
    dg-algebra $B$:

    \begin{center}
        \begin{tikzpicture}
            \node (1) {$k$};
            \node (2) [node distance=3cm, right of=1] {$C^0(X;K)$};
            \node (3) [node distance=3cm, right of=2] {$C^1(X;K)$};
            \node (4) [node distance=3cm, right of=3] {$C^2(X;K)$};
            
            \node (5) [node distance=1.6cm, below of=1] {$B^{-1}$};
            \node (6) [node distance=3cm, right of=5] {$B^0$};
            \node (7) [node distance=3cm, right of=6] {$B^1$};
            \node (8) [node distance=3cm, right of=7] {$B^2$};
            
            \node (9) [node distance=1.6cm, below of=5] {$0$};
            \node (10) [node distance=3cm, right of=9] {$0$};
            \node (11) [node distance=3cm, right of=10] {$H^1(X;K)$};
            \node (12) [node distance=3cm, right of=11] {$H^2(X;K)$};
            
            \node (13) [node distance=2cm, left of=1] {$\cdots$};
            \node (14) [node distance=2cm, left of=5] {$\cdots$};
            \node (15) [node distance=2cm, left of=9] {$\cdots$};
            \node (16) [node distance=2cm, right of=4] {$\cdots$};
            \node (17) [node distance=2cm, right of=8] {$\cdots$};
            \node (18) [node distance=2cm, right of=12] {$\cdots$};
            
            \draw [-to] (13) to node {} (1);
            \draw [-to] (14) to node {} (5);
            \draw [-to] (15) to node {} (9);
            \draw [-to] (4) to node {} (16);
            \draw [-to] (8) to node {} (17);
            \draw [-to] (12) to node {} (18);
            
            \draw [-to] (1) to node {$\epsilon$} (2);
            \draw [-to] (2) to node {$d^0$} (3);
            \draw [-to] (3) to node {$d^1$} (4);
            
            \draw [-to] (5) to node {$d^{-1}_B$} (6);
            \draw [-to] (6) to node {$d^0_B$} (7);
            \draw [-to] (7) to node {$d^1_B$} (8);
            
            \draw [-to] (9) to node {} (10);
            \draw [-to] (10) to node {} (11);
            \draw [-to] (11) to node {$0$} (12);
            
            \draw [-to] (5) to node {$q^{-1}$} (1);
            \draw [-to] (5) to node {} (9);
            
            \draw [-to] (6) to node {$q^0$} (2);
            \draw [-to] (6) to node [swap]{$p^0$} (10);
            
            \draw [-to] (7) to node {$q^1$} (3);
            \draw [-to] (7) to node [swap]{$p^1$} (11);
            
            \draw [-to] (8) to node {$q^2$} (4);
            \draw [-to] (8) to node [swap]{$p^2$} (12);
        \end{tikzpicture}
        \end{center}

    By removing the copy of $k$ from the augmented cochain complex, we can insert
    a copy of $k$ as $H^0(X;K)$ and form a new span of dg-quasi-isomorphisms:

    \begin{center}
        \begin{tikzpicture}
            \node (1) {$0$};
            \node (2) [node distance=3cm, right of=1] {$C^0(X;K)$};
            \node (3) [node distance=3cm, right of=2] {$C^1(X;K)$};
            \node (4) [node distance=3cm, right of=3] {$C^2(X;K)$};
            
            \node (5) [node distance=1.6cm, below of=1] {$0$};
            \node (6) [node distance=3cm, right of=5] {$B^0\oplus B^{-1}$};
            \node (7) [node distance=3cm, right of=6] {$B^1$};
            \node (8) [node distance=3cm, right of=7] {$B^2$};
            
            \node (9) [node distance=1.6cm, below of=5] {$0$};
            \node (10) [node distance=3cm, right of=9] {$k$};
            \node (11) [node distance=3cm, right of=10] {$H^1(X;K)$};
            \node (12) [node distance=3cm, right of=11] {$H^2(X;K)$};
            
            \node (13) [node distance=2cm, left of=1] {$\cdots$};
            \node (14) [node distance=2cm, left of=5] {$\cdots$};
            \node (15) [node distance=2cm, left of=9] {$\cdots$};
            \node (16) [node distance=2cm, right of=4] {$\cdots$};
            \node (17) [node distance=2cm, right of=8] {$\cdots$};
            \node (18) [node distance=2cm, right of=12] {$\cdots$};
            
            \draw [-to] (13) to node {} (1);
            \draw [-to] (14) to node {} (5);
            \draw [-to] (15) to node {} (9);
            \draw [-to] (4) to node {} (16);
            \draw [-to] (8) to node {} (17);
            \draw [-to] (12) to node {} (18);
            
            \draw [-to] (1) to node {} (2);
            \draw [-to] (2) to node {$d^0$} (3);
            \draw [-to] (3) to node {$d^1$} (4);
            
            \draw [-to] (5) to node {} (6);
            \draw [-to] (6) to node {$[d^0_B, 0]$} (7);
            \draw [-to] (7) to node {$d^1_B$} (8);
            
            \draw [-to] (9) to node {} (10);
            \draw [-to] (10) to node {$0$} (11);
            \draw [-to] (11) to node {$0$} (12);
            
            \draw [-to] (5) to node {} (1);
            \draw [-to] (5) to node {} (9);
            
            \draw [-to] (6) to node {$[q^0, 0]$} (2);
            \draw [-to] (6) to node [swap]{$[0, q^{-1}]$} (10);
            
            \draw [-to] (7) to node {$q^1$} (3);
            \draw [-to] (7) to node [swap]{$p^1$} (11);
            
            \draw [-to] (8) to node {$q^2$} (4);
            \draw [-to] (8) to node [swap]{$p^2$} (12);
        \end{tikzpicture}
        \end{center}

    The squares in the diagram commutes due to the original squares commuting
    and the cohomologies also agree by construction. Thus we have a span of 
    dg-quasi-isomorphisms $$H^\ast(X;K)\longleftarrow B'\longrightarrow C^\ast(X;K),$$ 
    which means $X$ is formal. 
\end{proof}

We are now ready to conclude with our main result.
\begin{theorem}
    Let $X$ be a space with $\text{cat}_{LS}(X)\leq 1$. Then $X$ is formal.     
\end{theorem}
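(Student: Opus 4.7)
The plan is to chain together the two results that immediately precede the statement. Since almost all of the work has been carried out in the preceding corollary and theorem, the proof reduces to assembling those ingredients in the right order.

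First, I would invoke \cref{cor:reduced_formal}, which already tells us that whenever $\text{cat}_{LS}(X)\leq 1$, the augmented cochain dg-algebra $\widetilde{C}^\ast(X;K)$ is formal. It is worth recalling, at least implicitly, why that corollary applies: the fundamental inequality $\text{cl}(X)\leq \text{cat}_{LS}(X)\leq 1$ forces the cup product on $\widetilde{H}^\ast(X;K)$ to vanish, \cref{thm:rudyak} then guarantees that every defined Massey $n$-product on $\widetilde{C}^\ast(X;K)$ is zero, and \cref{thm:1} promotes this to formality of $\widetilde{C}^\ast(X;K)$. So at this stage we know $X$ is reduced formal.

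Second, I would apply \cref{thm:reduced_formal_formal}, which upgrades reduced formality to genuine formality by reinstating the degree zero part through the explicit span of dg-quasi-isomorphisms constructed there. Putting these two steps together yields the desired conclusion in essentially one line.

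I do not expect any real obstacle here, because the content of the statement has been fully absorbed into the two previous results. The only thing to be careful about is not to conflate $\widetilde{C}^\ast(X;K)$ with $C^\ast(X;K)$: \cref{cor:reduced_formal} gives formality only for the augmented version, and the bridge to the un-augmented cochain algebra is precisely what \cref{thm:reduced_formal_formal} is designed to supply. Once that distinction is respected, the proof is a two-line citation of the preceding results.
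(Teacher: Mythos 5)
Your proposal is correct and matches the paper's intended argument exactly: the paper leaves this theorem without an explicit proof precisely because it follows immediately by combining \cref{cor:reduced_formal} with \cref{thm:reduced_formal_formal}, which is the two-step chain you describe. Your remark about distinguishing $\widetilde{C}^\ast(X;K)$ from $C^\ast(X;K)$ correctly identifies the only subtlety, and it is the one the paper addresses via \cref{thm:reduced_formal_formal}.
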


\begin{question}
    Are there any formal spaces with trivial cup product and Lusternik-Schnirelmann 
    category greater than $1$? 
\end{question}

\bibliographystyle{alpha}
\bibliography{references}

\end{document}